\DeclareMathOperator{\del}{\partial}
\newcommand{\Z}{\mathbb{Z}}
\newcommand{\C}{\mathbb{C}}
\newcommand{\R}{\mathbb{R}}
\newtheorem{theorem}{Theorem}
\newtheorem{corollary}[theorem]{Corollary}
\newtheorem{lemma}[theorem]{Lemma}
\title[Cotangent Bundles of $4$-Manifolds]%
{Symplectic Structures on the Cotangent Bundles of Open $4$-Manifolds}
\author[A. C. Knapp]{Adam C. Knapp}
\address{Department of Mathematics\\ Columbia University\\ New York, NY 10027}
\email{\href{mailto:knapp@math.columbia.edu}{knapp@math.columbia.edu}}
\thanks{Partially supported by NSF Grant DMS0739392}
\begin{document}
\ifthenelse{\pdfoutput > 0}{\DeclareGraphicsExtensions{.pdf,.jpg,.png,.mps}}%
{\DeclareGraphicsExtensions{.mps,.eps,.ps,.jpg,.png}}

\bibliographystyle{plain}

\begin{abstract}
  We show that, for any two orientable smooth open $4$-manifolds
  $X_0,X_1$ which are homeomorphic, their cotangent bundles
  $T^*X_0,T^*X_1$ are symplectomorphic with their canonical symplectic
  structure. In particular, for any smooth manifold $R$ homeomorphic
  to $\R^4$, the standard Stein structure on $T^*R$ is Stein homotopic
  to the standard Stein structure on $T^*\R^4 = \R^8$. We use this to
  show that any exotic $\R^4$ embeds in the standard symplectic $\R^8$
  as a Lagrangian submanifold.
  As a corollary, we show that $\R^8$ has uncountably many smoothly
  distinct foliations by Lagrangian $\R^4$s with their standard smooth
  structure.
\end{abstract}

\maketitle

\section{Introduction}

{\par We begin with some basics, which can be found in
  \cite{McDuff1998}. Throughout, assume that all manifolds are
  orientable, smoothable, and come with a fixed smooth structure
  unless otherwise indicated. Let $N$ be a smooth manifold of real
  dimension $n$. The cotangent bundle of $N$, $T^*N$, carries a {\em
    canonical $1$-form} $\lambda_{0}$ defined, in local coordinates
  $x_1,\ldots,x_n,y_1=dx_1,\ldots,y_n=dx_n$, by
  $\lambda_0=\sum_{i=1}^n y_i dx_i$. The $1$-form $\lambda_0\in
  \Omega^1(T^*N)$ is uniquely characterized by the property that
  $\sigma^*\lambda=\sigma$ for any $1$-form $\sigma$ on $N$ thought of
  as a section of $T^*N\to N$. Then, for any diffeomorphism
  $\daleth:M\to N$, $\lambda_0^N = \daleth^{**} \lambda_0^M$ where
  $\daleth^{**}$ is the induced map $\daleth^{**}:T^*(T^*M) \to
  T^*(T^*N)$. }

{\par From the canonical $1$-form we obtain a {\em canonical
    symplectic form} $\omega_{0} = -d\lambda_{0}$ on $T^*N$. In local
  coordinates, $\omega_0$ has the form $\omega_{0}=\sum_{i=1}^n
  dx_i\wedge dy_i$. As $\omega_0$ depends only on $\lambda_0$, any two
  diffeomorphic manifolds have symplectomorphic cotangent
  bundles. This fact is the basis of an idea of V. I. Arnol'd which
  states that the smooth topology of a manifold should be reflected in
  the symplectic topology of its cotangent bundle.}

{\par As a realization of this idea, M. Abouzaid showed in
  \cite{MR2874640} that in dimension $n \equiv 1 \mod 4$, an exotic
  $n$-sphere $S$ which does not bound a paralizable manifold does not
  embed as a Lagrangian submanifold of $T^*S^n$ with the the standard
  symplectic structure; hence the cotangent bundle of such an exotic
  sphere cannot be symplectomorphic to $T^*S^n$. As $T^*S$ and
  $T^*S^n$ are diffeomorphic, the canonical symplectic structure on
  $T^*S$ can be considered an exotic symplectic structure on
  $T^*S^n$. }

{\par The existence and classification of exotic symplectic structures
  on a given smooth manifold is of independent interest. In
  \cite{MR2182703} P. Seidel and I. Smith and in \cite{arXiv1007.3281}
  M. Abouzaid and P. Seidel construct exotic symplectic structures on
  $\R^{2n}$ for $n \geq 4$. Also, in \cite{MR2497314}, M. McLean
  constructs exotic symplectic structures on $T^*S^n$ for $n\geq
  3$. However, these constructions do not arise as cotangent bundles,
  nor are they symplectomorphic to them in any obvious way. }

{\par When $n\neq 4$, smoothing theory tells us that there is a unique
  smooth structure on the topological manifold $\R^n$ up to
  homeomorphism. (See \cite{MR0149457} for $n>4$, \cite{MR0048805} for
  $n=3$.) So among the symplectic structures on $T^*\R^n \cong
  \R^{2n}$, there is a fixed one which corresponds to the canonical
  structure on $T^*\R^n$. However in dimension $n=4$, the topological
  manifold $\R^4$ admits uncountably infinitely many inequivalent
  smoothings. For each smooth manifold $R$, homeomorphic to $\R^4$,
  $T^*R$ and $T^*\R^4$ are diffeomorphic. }

{\par It is then natural to ask: Are any of the exotic symplectic
  structures on $\R^8$ symplectomorphic to the canonical structure on
  $T^*R$ for some smooth $R$ homeomorphic to $\R^4$? We answer a
  stronger question in the negative. }

\begin{theorem} \label{thm:moser-def} Let $X_0,X_1$ be smooth, open,
  homeomorphic $4$-manifolds. If $\pi_1(X_i)\neq 0$ we assume that
  there is an $s$-cobordism between $X_0$ and $X_1$. \footnote{Recall
    that an $s$-cobordism is an $h$-cobordism with vanishing Whitehead
    torsion. In dimension $4$, Freedman proved that in the case of
    ``good'' fundamental group, such an $s$-cobordism has a
    topological product structure. See Theorem 7.1A of
    \cite{MR1201584}. We will only require the existence of such an
    $s$-cobordism, not a product structure on it, so we can remove the
    qualification on $\pi_1$. }
  Then each of $T^*X_0$ and $T^*X_1$ have a fixed Stein structure up
  to Stein homotopy and the Stein structures on $T^*X_0$ and $T^*X_1$
  are Stein homotopic. Therefore, $T^*X_0$ and $T^*X_1$ are
  symplectomorphic with their canonical symplectic structures.
\end{theorem}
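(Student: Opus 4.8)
The plan is to reduce the whole statement to the flexibility theory for Weinstein structures, exploiting that the $X_i$ are open. First I would present the canonical Stein structure on each $T^*X_i$ as a \emph{subcritical} Weinstein structure. Since $X_i$ is open, it admits a proper Morse exhaustion $\phi_i$ with no critical points of index $4$ (an open $n$-manifold has a handle decomposition with no $n$-handles). The associated plurisubharmonic exhaustion of $T^*X_i$, which near the zero section has the shape $\phi_i(q)+|p|^2$, has a Weinstein skeleton built from isotropic handles whose indices are the Morse indices of $\phi_i$, all $\leq 3 < 4$. Thus the canonical Stein structure is subcritical, hence flexible by Cieliebak's theorem on subcritical Weinstein manifolds. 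Because any two such presentations carry the same formal data, namely the canonical almost complex structure determined by $TX_i$ (concretely $T(T^*X_i)\cong \pi^*(TX_i\otimes_{\R}\C)$), the $h$-principle for flexible Weinstein structures already shows that the canonical Stein structure on each $T^*X_i$ is independent of the auxiliary choices up to Stein homotopy. This gives the first assertion of the theorem.

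Next I would compare $T^*X_0$ with $T^*X_1$, and this is where the hypotheses enter and where I expect the main obstacle to lie. Rather than first extracting a bare diffeomorphism and then a Stein homotopy, I would assemble a single flexible Weinstein cobordism between $T^*X_0$ and $T^*X_1$ from a smooth $s$-cobordism $W$ between $X_0$ and $X_1$ (assumed in general, and automatic when $X_i$ is simply connected by Wall's theorem that homeomorphic simply connected smooth $4$-manifolds are smoothly $h$-cobordant, the $h$-cobordism then being an $s$-cobordism). Openness of $W$ lets me choose Morse data without top-index critical points, so the cobordism is subcritical, hence flexible. Since $W$ is an $s$-cobordism its Whitehead torsion vanishes, and the induced Weinstein cobordism between the two $8$-dimensional ends has dimension $\geq 6$; the flexible $h$-cobordism statement of Cieliebak--Eliashberg should then identify it with a cylinder. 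The genuinely delicate point is the non-compactness of the $X_i$: the relevant $s$-cobordism and product-structure theorems must be used in proper/tame form, and it is exactly here that the $4$-dimensional exotic behaviour is dissolved by passing to dimension $8$.

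Trivializing this Weinstein cobordism simultaneously produces a diffeomorphism $T^*X_0\cong T^*X_1$ and a Stein homotopy between the canonical Stein structures, once one checks that the formal data match across the ends. For this I would note that the canonical almost complex structure on a cotangent bundle is pulled back from $TX_i\otimes_{\R}\C$, and that over the homotopy type of $X_i$, an $(n-1)$-complex, the stable tangential data carried by a homeomorphism already determine this class; orientability ($w_1=0$) and the $2$-connectivity of $\mathrm{Top}/\mathrm{O}$ keep the comparison in a range with no further obstruction. With the formal data identified, the $h$-principle applies on the single smooth manifold $T^*X_1$ to the two flexible structures and yields the desired Stein homotopy. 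Finally, a Stein homotopy between the complete Stein manifolds $T^*X_0$ and $T^*X_1$ gives, by the standard Moser argument for deformations of Weinstein structures, an exact symplectomorphism carrying one canonical symplectic form to the other. As noted, I expect the smooth/cobordism step for open manifolds to be the crux, while the matching of formal data and the concluding Moser argument are routine once the flexible Weinstein framework is in place.
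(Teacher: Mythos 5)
Your outline agrees with the paper on the outer steps: openness of the $X_i$ gives Morse exhaustions with no index-$4$ critical points, hence the canonical Stein structures on the $T^*X_i$ are subcritical, and the Cieliebak--Eliashberg uniqueness/$h$-principle for subcritical Stein structures plus a Moser argument finishes once the two structures live on one smooth manifold with homotopic formal data. But the step you yourself flag as the crux --- comparing $T^*X_0$ with $T^*X_1$ --- is where your proposal has a genuine gap. A ``flexible Weinstein cobordism between $T^*X_0$ and $T^*X_1$'' does not parse: a cobordism between these $8$-manifolds is $9$-dimensional and cannot carry a symplectic, let alone Weinstein, structure; Weinstein cobordisms interpolate between \emph{contact} manifolds one dimension down, and the Weinstein $h$-cobordism theorem of Cieliebak--Eliashberg takes as \emph{input} a smooth product structure (or at least a given smooth cobordism) and homotopes the Weinstein data on it --- it cannot manufacture the underlying diffeomorphism $T^*X_0\cong T^*X_1$. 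That diffeomorphism is a purely smooth-topological fact and is exactly what the paper's Lemma~\ref{lem:cotangent} supplies: pull the rank-$4$ bundle $T^*X_0$ back over the $5$-dimensional $s$-cobordism $W$, check it restricts to $T^*X_1$ on the other end by characteristic classes, pass to the unit sphere and disc bundles to get $s$-cobordisms of $7$- and then $8$-manifolds-with-boundary, and apply the high-dimensional $s$-cobordism theorem twice. This is where the $4$-dimensional exotica are dissolved; your Weinstein-cobordism packaging cannot substitute for it.

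Two smaller points. First, your matching of formal data leans on ``$2$-connectivity of $\mathrm{Top}/\mathrm{O}$,'' but $\pi_3(\mathrm{Top}/\mathrm{O})\cong\Z/2$ is nonzero and an open $4$-manifold has the homotopy type of a $3$-complex, so that argument does not close by itself. The paper sidesteps this by comparing the two almost complex structures directly on the fixed $8$-manifold $V$: the relevant classifying space is $SO(8)/U(4)$, whose $\pi_1,\pi_3,\pi_4,\pi_5$ vanish, leaving only an obstruction in $H^2$ killed by equality of first Chern classes. Second, your appeal to Wall's theorem for the existence of the $h$-cobordism is stated for closed simply connected $4$-manifolds; for open manifolds this existence (and the proper form of the $s$-cobordism theorem) needs more care, which is why the theorem carries the $s$-cobordism hypothesis for $\pi_1\neq 0$.
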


{\par We reserve the definition of a Stein manifold until the next
  section. A special case of Theorem~\ref{thm:moser-def} is the
  following: }

\begin{corollary} \label{cor:r4}
  Let $R$ be a smooth $4$-manifold homeomorphic to $\R^4$. Then the
  Stein structure on $T^*R$ is Stein homotopic to the standard Stein
  structure on $\R^8$; hence $T^*R$ and $\R^8$ are symplectomorphic.
\end{corollary}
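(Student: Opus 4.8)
The plan is to obtain this as the immediate specialization of Theorem~\ref{thm:moser-def} to the pair $X_0 = R$ and $X_1 = \R^4$, where $\R^4$ is equipped with its standard smooth structure. First I would check that the hypotheses of the theorem are met. Both $R$ and $\R^4$ are smooth, open $4$-manifolds, and by assumption $R$ is homeomorphic to $\R^4$, so they are homeomorphic. The theorem imposes one further condition only in the case of nontrivial fundamental group, namely the existence of an $s$-cobordism between the two manifolds. Since $R$ is homeomorphic to the contractible space $\R^4$, we have $\pi_1(R) = \pi_1(\R^4) = 0$, so this condition is vacuous and no $s$-cobordism hypothesis is required.

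Granting the hypotheses, Theorem~\ref{thm:moser-def} directly yields that the canonical Stein structures on $T^*R$ and on $T^*\R^4$ are each well-defined up to Stein homotopy and are Stein homotopic to one another, and consequently that $T^*R$ and $T^*\R^4$ are symplectomorphic with their canonical symplectic structures. The remaining step is to identify the canonical structure on $T^*\R^4$ with the standard structure on $\R^8$: in the global coordinates $x_1,\dots,x_4,y_1,\dots,y_4$ on $T^*\R^4 = \R^4 \times \R^4$, the canonical form is $\omega_0 = \sum_{i=1}^4 dx_i \wedge dy_i$, which is precisely the standard symplectic form on $\R^8$, and the associated canonical Stein structure is the standard one. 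Combining these observations gives the claim that the Stein structure on $T^*R$ is Stein homotopic to the standard Stein structure on $\R^8$, whence the two are symplectomorphic.

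Because the corollary is a direct specialization, I do not anticipate any genuine obstacle in this argument: all of the substantive work is contained in Theorem~\ref{thm:moser-def}. The only points to confirm are that the fundamental-group hypothesis is automatically satisfied when one of the manifolds is $\R^4$, and that the canonical symplectic and Stein structures on $T^*\R^4$ coincide with the standard ones on $\R^8$; both are routine.
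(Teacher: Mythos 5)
Your proposal is correct and matches the paper, which presents this corollary simply as the special case $X_0 = R$, $X_1 = \R^4$ of Theorem~\ref{thm:moser-def} with no separate proof. Your additional checks (that the $s$-cobordism hypothesis is vacuous since $\pi_1(R)=0$, and that the canonical structure on $T^*\R^4$ is the standard one on $\R^8$) are exactly the routine verifications the paper leaves implicit.
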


{\par A Lagrangian submanifold $L$ of a symplectic manifold
  $(V,\omega)$ is a submanifold of maximal dimension where
  $\omega|_L\equiv 0$. In a cotangent bundle, graphs of closed
  $1$-forms are Lagrangian. }

\begin{corollary}
  Let $R$ be any exotic $\R^4$. Then $R$ embeds in the standard
  symplectic $(\R^8,\omega)$ as a Lagrangian submanifold.
\end{corollary}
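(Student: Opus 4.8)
The plan is to realize $R$ as the zero section of its own cotangent bundle and then transport that copy into $\R^8$ by the symplectomorphism supplied by Corollary~\ref{cor:r4}. Since $R$ is an exotic $\R^4$, it is in particular a smooth $4$-manifold homeomorphic to $\R^4$, so Corollary~\ref{cor:r4} yields a symplectomorphism $\Phi\colon (T^*R,\omega_0)\to(\R^8,\omega)$.

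First I would observe that the zero section $Z=\{(q,0):q\in R\}\subset T^*R$ is an embedded copy of $R$ which is Lagrangian for the canonical symplectic form. Indeed, $\lambda_0$ is characterized by $\sigma^*\lambda_0=\sigma$ for every $1$-form $\sigma$ on $R$ regarded as a section of $T^*R\to R$; applying this to $\sigma=0$ shows that the pullback of $\lambda_0$ along the inclusion of the zero section vanishes. Hence $\omega_0=-d\lambda_0$ also restricts to zero on $Z$, and since $\dim Z=4=\tfrac12\dim T^*R$, the submanifold $Z$ is Lagrangian.

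Next I would push $Z$ forward by $\Phi$. Because $\Phi$ is a diffeomorphism, $\Phi(Z)$ is a smoothly embedded submanifold of $\R^8$ diffeomorphic to $Z\cong R$; because $\Phi$ is a symplectomorphism, $\Phi^*\omega=\omega_0$, so $\omega$ restricts to zero on $\Phi(Z)$. Thus $\Phi|_Z\colon R\to(\R^8,\omega)$ is the desired Lagrangian embedding.

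The point I want to stress is that essentially all of the content lives upstream: the nontrivial input is the existence of the symplectomorphism $\Phi$ from Corollary~\ref{cor:r4}, itself a special case of Theorem~\ref{thm:moser-def}. Once that is in hand the corollary is formal, the only thing to verify being the standard fact that the zero section is Lagrangian, which is immediate from the defining property of $\lambda_0$. Consequently I do not anticipate a genuine obstacle here; the role of this statement is simply to repackage the symplectomorphism as a concrete embedding result.
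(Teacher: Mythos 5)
Your proof is correct and follows exactly the same route as the paper: the zero section of $T^*R$ is Lagrangian, and its image under the symplectomorphism from Corollary~\ref{cor:r4} gives the desired Lagrangian embedding in $(\R^8,\omega)$. You merely spell out the standard verification that the zero section is Lagrangian, which the paper leaves implicit.
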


\begin{proof}
  The zero section of $T^*R$ is a Lagrangian copy of $R$ which sits
  inside $\R^8$ as its image under the symplectomorphism.
\end{proof}

{\par Currently, there are no known smooth manifolds which
  \begin{enumerate}
  \item are homeomorphic to $\R^4$,
  \item have finite handlebody decompositions, and
  \item which are known to be not diffeomorphic to the standard
    $\R^4$.
  \end{enumerate}
  If the smooth $4$-dimensional Poincaré conjecture is false, such
  an object exists and arises by puncturing an exotic
  $4$-sphere. Potential examples arise via Gluck twists
  \cite{MR0131877} or from proposed counterexamples to the
  Andrews-Curtis conjecture. Recall that the Andrews-Curtis conjecture
  states that balanced presentations of the trivial group can be
  trivialized using the Andrews-Curtis moves; a collection of moves on
  group presentations related to elementary Morse moves of $1$ and $2$
  handles. (See Remark 5.1.11 of \cite{MR1707327} for examples.) Since
  no such finite handlebody is currently known to be exotic, all known
  examples involve highly complicated behavior at infinity. }

{\par This complicated behavior at infinity obstructs the usual
  definitions of Lagrangian Floer homology for non-compact
  Lagrangians. We then ask the question: Can a Lagrangian Floer
  homology for be defined for Lagrangians such as we have describe? If
  so, can any of the above Lagrangian exotic $\R^4$s be distinguished
  by their Floer homologies? }

{\par A symplectic manifold $(V,\omega)$ is called {\em exact} if $\omega =
  d\alpha$ for some $1$-form $\alpha$. (In the case of the canonical
  structure on a cotangent bundle $\alpha=-\lambda_0$.) A Lagrangian
  submanifold $L$ of an exact symplectic manifold is exact if
  $\alpha|_L$ is exact. Note that $\alpha|_L$ is closed on $L$ as
  $d\alpha|_L=\omega|_L\equiv 0$. When $L$ has $H^1(L;\R)=0$, every
  closed form is exact.}

{\par The usual version of the {\em nearby Lagrangian conjecture}
  states that if a closed manifold $L$ is an exact Lagrangian in
  $T^*N$ (with $N$ compact) then $L$ is Hamiltonian isotopic to the
  zero section of $T^*N$. We have then shown that the corresponding
  non-compact version (when $N$ is $4$-dimensional and open) is false
  -- without some sort of control at infinity -- since such a
  Hamiltonian isotopy would give a diffeomorphism between any two
  smooth structures on $N$. }

{\par Let $(X_0,\mathcal{F}_0)$ and $(X_1,\mathcal{F}_1)$ be two
  smooth manifolds with smooth foliations. We will call these two
  foliations {\em smoothly equivalent} if there exists a
  diffeomorphism $\phi:X_0\to X_1$ such that
  $\phi_*(\mathcal{F}_0)=\mathcal{F}_1$. Suppose that a foliated
  manifold $(X,\mathcal{F})$ admits a {\em smooth global slice} by
  which we will mean a smooth manifold $Y$ and smooth embedding $Y \to
  X$ which intersects every leaf of $\mathcal{F}$ once
  transversely. It is straightforward to show that any two global
  slices of $\mathcal{F}$ are diffeomorphic; hence the leaf space of
  $\mathcal{F}$ is naturally identified with $Y$ with its smooth
  structure. }

\begin{corollary}
  The standard symplectic $\R^8$ admits uncountably infinitely many
  smoothly distinct foliations by Lagrangian $\R^4$'s with the standard
  smooth structure.
\end{corollary}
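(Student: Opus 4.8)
The plan is to realise the required foliations as push-forwards, under the symplectomorphisms of Corollary~\ref{cor:r4}, of the vertical (fibrewise) foliations of cotangent bundles $T^*R$, as $R$ ranges over an uncountable family of mutually non-diffeomorphic exotic $\R^4$s.

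First I would fix such a family $\{R_\alpha\}_{\alpha\in A}$ with $A$ uncountable; the existence of uncountably many pairwise non-diffeomorphic smooth structures on $\R^4$ is by now classical (Taubes). For each $\alpha$ the bundle projection $T^*R_\alpha\to R_\alpha$ is a smooth submersion, and its fibres $T^*_pR_\alpha$ define a smooth foliation $\mathcal{V}_\alpha$ of $T^*R_\alpha$. Two observations make these fibres the right leaves: each is a linear cotangent space, hence diffeomorphic to the \emph{standard} $\R^4$, and each is Lagrangian, since $\lambda_0$ (and therefore $\omega_0=-d\lambda_0$) restricts to zero on the vertical fibres.

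Next I would transport everything to $\R^8$. By Corollary~\ref{cor:r4} there is a symplectomorphism $\Phi_\alpha:(T^*R_\alpha,\omega_0)\to(\R^8,\omega)$; as it is in particular a diffeomorphism, the push-forward $\mathcal{F}_\alpha:=(\Phi_\alpha)_*\mathcal{V}_\alpha$ is a smooth foliation of $\R^8$ whose leaves are Lagrangian $\R^4$s with their standard smooth structure. The zero section of $T^*R_\alpha$ meets every fibre once and transversally, so it is a smooth global slice of $\mathcal{V}_\alpha$ diffeomorphic to $R_\alpha$; its image under $\Phi_\alpha$ is then a smooth global slice of $\mathcal{F}_\alpha$, still diffeomorphic to $R_\alpha$.

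Finally I would argue that distinct $\alpha$ give smoothly inequivalent foliations. If $\phi:\R^8\to\R^8$ were a diffeomorphism with $\phi_*\mathcal{F}_\alpha=\mathcal{F}_\beta$, then $\phi$ would carry leaves to leaves, and hence a global slice of $\mathcal{F}_\alpha$ to a global slice of $\mathcal{F}_\beta$; by the discussion preceding the statement, global slices realise the leaf space up to diffeomorphism, so this would force $R_\alpha\cong R_\beta$. Since the $R_\alpha$ are pairwise non-diffeomorphic, the $\mathcal{F}_\alpha$ are pairwise smoothly distinct, and there are uncountably many. The only serious input here is Corollary~\ref{cor:r4} itself; once the exotic cotangent bundles are identified with the standard $\R^8$, the remaining work is the routine transport of the vertical foliation and the recognition of the base as the diffeomorphism-invariant leaf space. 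I therefore expect no real obstacle beyond correctly invoking that corollary together with the classical count of exotic $\R^4$s.
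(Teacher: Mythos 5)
Your proposal is correct and follows essentially the same route as the paper: push forward the vertical foliation of $T^*R$ under the symplectomorphism of Corollary~\ref{cor:r4}, and distinguish the resulting foliations by the smooth type of the leaf space (realized via the zero section as a global slice), invoking the uncountability of smooth structures on $\R^4$. Your write-up simply makes explicit the details the paper leaves implicit.
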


\begin{proof}
  For each exotic $\R^4$, $R$, $T^*R$ has a codimension $4$ foliation
  by the fibers of the projection $T^*R \to R$, each a Lagrangian
  $\R^4$ with its standard smooth structure. Since the leaf space
  $\mathcal{L}$ is naturally identified with $R$, $\mathcal{L}$ is a
  smooth manifold and its smooth type is an invariant of the
  foliation. The result follows by the uncountability of smooth
  structures on $\R^4$ together with Corollary~\ref{cor:r4}.
\end{proof}

\section{Stein Manifolds}

{\par A smooth manifold $V$ of real dimension $2n$, equipped with an
  almost complex structure $J$ is said to be Stein if $J$ is
  integrable and $V$ admits a proper holomorphic embedding into
  $\C^{N}$ for some $N$. By \cite{MR0098847,MR0123732,MR0148942}, a
  complex manifold $(V,J)$ is Stein if and only if it admits a smooth
  function $\phi:V \to \R$ which is
  \begin{enumerate}
  \item proper and bounded below (exhausting) and
  \item is $J$-convex in the sense that $-dd^\C\phi(v,Jv)>0$ for all
    $v$. Here $d^\C \phi$ denotes $d\phi \circ J$.
  \end{enumerate}
  We call the triple $(V,J,\phi)$ a Stein structure on $V$. Note that
  $-dd^\C\phi$ is a symplectic form on $V$ compatible with $J$. In
  fact, the existence of a Stein structure only requires a weaker
  condition, due to the following theorem of Eliashberg: }

\begin{theorem}[Eliashberg \cite{MR1044658}] \label{thm:E}
  A smooth manifold $V^{2n}$ with $2n>4$ admits a Stein structure if
  and only if it admits an almost complex structure $J$ and an
  exhausting Morse function $\phi$ with critical points of index
  $\leq n$. More precisely, $J$ is homotopic through almost complex
  structures to a complex structure $J'$ such that $\phi$ is $J'$
  convex.
\end{theorem}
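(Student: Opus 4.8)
The plan is to prove the two implications separately, treating the ``if'' direction as the substantial one. For the ``only if'' direction, suppose $(V,J,\phi)$ is a Stein structure. Then $J$ is in particular an almost complex structure and $\phi$ is exhausting, so it remains only to produce the index bound. After a $C^2$-small perturbation I would arrange $\phi$ to be Morse without destroying $J$-convexity, which is legitimate because the condition $-dd^\C\phi(v,Jv)>0$ is open on the $2$-jet of $\phi$. The index bound is then forced by linear algebra at a critical point: the complex Hessian of a $J$-convex function is positive definite, so the negative eigenspace of the real Hessian contains no $J$-complex line and is therefore totally real, hence of real dimension $\le n$. This yields an exhausting Morse function with all indices $\le n$.

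The ``if'' direction is the heart of the matter, and I would prove it by induction on the handles of $\phi$. The inductive hypothesis is that on a sublevel set $V_c=\{\phi\le c\}$ the structure $J$ has been homotoped to an integrable complex structure for which a $C^0$-close modification of $\phi$ is $J$-convex; equivalently, $\partial V_c$ is strictly pseudoconvex and carries the contact structure $\xi = T(\partial V_c)\cap J\,T(\partial V_c)$. Passing a critical point of index $k$ corresponds to attaching a $k$-handle along an embedded sphere $S^{k-1}\subset \partial V_c$, and the goal is to extend the Stein structure across the handle. The model is Weinstein's symplectic handle, which can be glued $J$-convexly precisely when the attaching sphere is isotropic for $\xi$ and its conformal symplectic normal data matches the complex framing supplied by $J$.

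For the subcritical handles ($k<n$) the attaching sphere has dimension $k-1<n-1$, so it can be $C^0$-isotoped to an isotropic sphere with no formal obstruction, and $J$ can then be homotoped rel the remainder of $V_c$ to agree with the handle model; this step is flexible. The genuine difficulty lies with the critical handles, $k=n$, where the attaching sphere is an $S^{n-1}$ that must be made \emph{Legendrian} (isotropic of maximal dimension $n-1$) in the contact manifold $\partial V_c^{2n-1}$, in the homotopy class dictated by $J$. Here I would invoke Gromov's $h$-principle for isotropic embeddings: an embedded sphere whose formal Legendrian data is realized can be $C^0$-approximated by a genuine Legendrian embedding, and the almost complex structure $J$ is exactly what furnishes that formal data. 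Once the sphere is Legendrian with the correct framing, Weinstein's handle attachment extends both the integrable structure and the $J$-convex function over the handle, completing the inductive step and ultimately producing the Stein structure $(V,J',\phi)$ with $J'$ homotopic to $J$.

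The main obstacle is precisely this Legendrianization of the critical attaching spheres, and it is the one place where the hypothesis $2n>4$ is indispensable: the $h$-principle for Legendrian embeddings requires the ambient contact dimension to satisfy $2n-1\ge 5$, i.e.\ $n\ge 3$. In the excluded borderline case $n=2$ the attaching spheres are Legendrian knots in a contact $3$-manifold, for which the classical Thurston--Bennequin and rotation invariants obstruct the $h$-principle; the flexibility that drives the induction genuinely fails there, which is why the theorem is stated only for $2n>4$.
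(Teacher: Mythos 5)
The paper does not prove this statement: it is quoted verbatim as Eliashberg's theorem and used as a black box (via the citation \cite{MR1044658}), so there is no in-paper argument to compare yours against. Judged on its own terms, your sketch is a faithful outline of the actual proof as it appears in Eliashberg's paper and in the Cieliebak--Eliashberg book. The ``only if'' direction is exactly right: the Levi form identity $-dd^\C\phi(v,Jv)=\mathrm{Hess}\,\phi(v,v)+\mathrm{Hess}\,\phi(Jv,Jv)$ at a critical point shows the negative eigenspace contains no complex line, hence is totally real of dimension $\le n$, and the Morse perturbation is harmless because $J$-convexity is an open condition on $2$-jets. For the ``if'' direction you correctly locate the heart of the matter in the Legendrian realization of the critical attaching spheres, and your explanation of why $2n>4$ is needed --- the existence $h$-principle for Legendrian embeddings in contact manifolds of dimension $\ge 5$ versus the Thurston--Bennequin obstruction when $n=2$ --- is precisely the right one.

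The one place your sketch substantially understates the difficulty is the sentence asserting that ``Weinstein's handle attachment extends both the integrable structure and the $J$-convex function over the handle.'' Weinstein's handle is a symplectic (Weinstein) model; Eliashberg's theorem demands a genuinely \emph{integrable} complex structure and an honestly $J$-convex exhaustion, and passing from the Weinstein model back to the holomorphic category is the bulk of the work: one needs a standard $J$-convex handle in $\C^n$, a real-analytic (or totally real) approximation of the Legendrian attaching sphere so that the ambient complex structure can be matched to the model near it, an extension of the integrable structure over the handle interior, and a smoothing of the $J$-convex function across the gluing region. None of this is automatic, and it is why the theorem is a theorem rather than a corollary of Weinstein handle attachment. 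Two smaller elisions: even subcritical isotropic realization requires formal data (supplied by $J$, as you note for the critical case), and since $\phi$ may have infinitely many critical points the induction is an infinite process, so one must check the construction stabilizes on each sublevel set. As a blind sketch of a deep cited result, though, your architecture is correct.
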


{\par Associated to every symplectic manifold $(V,\omega)$ there is a
  contractible space of almost complex structures $J$ which are
  compatible with $\omega$ in the sense that $g(v,w)=\omega(v,Jw)$ is
  a Riemannian metric and $\omega(Jv,Jw)=\omega(v,w)$. When
  $(V,\omega)=(T^*N,\omega_0)$, we can construct a contractible
  subspace of these structures explicitly in terms of Riemannian
  metrics on $N$. That is, pick a Riemannian metric $g_N$ on $N$ and
  define $J_g$ in local coordinates $x_1,\ldots,x_n,y_1,\ldots,y_n$
  (for $T^*N$) by
  \begin{displaymath}
    J_g\left(\frac{\del}{\del x_i} \right) 
    =
    \sum_{j=1}^n (g_N)_{ij} \frac{\del}{\del y_j}
    \text{ and }
    J_g\left(\frac{\del}{\del y_i} \right)
    =
    \sum_{j=1}^n -(g_N)^{ij} \frac{\del}{\del x_j}
  \end{displaymath}
  Then $J_g$ is compatible with $\omega_0$ and, on the zero section
  $Z$, $g|_Z \equiv g_N$. This almost complex structure is not, in
  general, integrable.
}

{\par Now, pick an exhausting Morse function $f:N \to \R$. Let
  $\phi:T^*N \to \R$ be defined by $\phi(x,y)= f(x) +
  \frac{1}{2}\|y\|^2$. Then $\phi$ is again an exhausting Morse
  function, now for $T^*N$, whose critical points occur along the zero
  section and have an index-preserving bijection with those of $f$. As
  long as the dimension of $N$ is $n>2$, the conditions of
  Theorem~\ref{thm:E} are satisfied and $T^*N$ admits a Stein
  structure. }

{\par An exhausting Morse function on $V^{2n}$ is called {\em
    subcritical} if it has only critical points of index $< n$. A
  Stein structure $(\phi,J)$ is called {\em subcritical} if $\phi$ is
  subcritical. In the subcritical case, Y. Eliashberg and K. Cieliebak
  showed that Stein structures are unique up to homotopy:
  \begin{theorem}[Eliashberg, Cieliebak \cite{EliashbergCieliebak}]
    \label{thm:EC}
    Let $n>3$ and let $(\phi_0,J_0)$, $(\phi_1,J_1)$ be two
    subcritical Stein structures on $V^{2n}$. If $J_0$ and $J_1$ are
    homotopic as almost complex structures, then $(\phi_0,J_0)$ and
    $(\phi_1,J_1)$ are homotopic as Stein structures.
  \end{theorem}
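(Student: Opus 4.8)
The plan is to isolate the source of flexibility peculiar to the subcritical range and to show that it reduces all of the data of a Stein structure to the homotopy class of its almost complex structure. First I would translate the problem from the Stein picture into the Weinstein picture. A Stein structure $(\phi,J)$ produces the symplectic form $\omega=-dd^\C\phi$, the Liouville form $\lambda=-d^\C\phi$, and the Liouville vector field $X$ determined by $\iota_X\omega=\lambda$; this $X$ is gradient-like for $\phi$, so one obtains a Weinstein structure $(V,\omega,X,\phi)$. A path of Stein structures yields a path of Weinstein structures, and conversely a Weinstein homotopy can be upgraded to a Stein homotopy via Theorem~\ref{thm:E} and its parametric version (this equivalence of the two homotopy theories is itself one of the statements one must set up). It then suffices to prove that two subcritical Weinstein structures on $V$ with homotopic underlying almost complex structures are Weinstein homotopic.

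Next I would pass to Weinstein handle decompositions. Each structure decomposes $V$ into handles, where a handle of index $k$ is glued to the contact-type boundary of the current sublevel set along an isotropic sphere $S^{k-1}$ carrying a trivialization of its conformal symplectic normal bundle. Because the structures are subcritical we have $k\le n-1$, so every attaching sphere is an isotropic submanifold of dimension at most $n-2$ inside a contact manifold of dimension $2n-1$ — that is, it lies strictly below the critical isotropic dimension. The engine of the whole argument is the h-principle for such subcritical isotropic embeddings: by Gromov's theory and its parametric and relative refinements, the space of genuine isotropic embeddings with framings is weakly homotopy equivalent to the space of formal ones, so the attaching data is completely flexible and is pinned down, up to contact isotopy, by its homotopy-theoretic shadow. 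Running this flexibility across an entire handle decomposition is what yields Cieliebak's structural statement that a subcritical Weinstein manifold is Weinstein homotopic to a split one.

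I would then compare the two structures handle by handle, inductively on the index filtration, building the desired Weinstein homotopy: match the $0$-handles, use the h-principle to isotope the framed attaching spheres of one structure onto those of the other, and climb up through the handles. The key point is that the formal attaching data of the entire Weinstein structure is exactly what is recorded by the homotopy class of the compatible $J$ together with the fixed smooth manifold $V$; the hypothesis that $J_0$ and $J_1$ are homotopic therefore says precisely that the two collections of formal data agree, and the h-principle converts that formal agreement into a genuine Weinstein homotopy. Pulling this homotopy back through the Stein–Weinstein equivalence of the first step completes the argument.

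I expect the main obstacle to be the bookkeeping that identifies ``formal attaching data'' with the homotopy class of $J$ at every stage. One must verify that homotopic almost complex structures induce homotopic formal isotropic setups all the way up the filtration, and must reconcile the possibly different \emph{smooth} handle decompositions coming from $\phi_0$ and $\phi_1$: discrepancies have to be absorbed by creating and cancelling subcritical handle pairs and by handle slides, all without leaving the subcritical Weinstein category, which is where the dimension hypothesis $n>3$ enters through the general-position and Whitney-trick arguments. Making the parametric h-principle hold uniformly along such a homotopy — rather than at a single fixed structure — is the delicate technical heart of the proof.
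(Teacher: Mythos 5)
The paper does not actually prove Theorem~\ref{thm:EC}: it is imported verbatim from Cieliebak--Eliashberg \cite{EliashbergCieliebak}, so the only proof to compare against is the one in that reference. Your outline is a faithful sketch of that proof --- the Stein--Weinstein correspondence, Weinstein handle decompositions, Gromov's parametric h-principle for subcritical isotropic embeddings (attaching spheres of dimension at most $n-2$ in a $(2n-1)$-dimensional contact level set), and the induction over the handle filtration with Cerf-theoretic creation/cancellation and handle slides to reconcile the two Morse functions --- so your proposal is correct and takes essentially the same route as the cited source.
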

  Here a Stein homotopy consists of a concatenation of ``simple Morse
  homotopies'' i.e. sequences of Morse birth-deaths and handle
  slides. In this case, critical points of the $\phi_t$ do not escape
  to infinity and Moser's trick\footnote{See Section 3.2
    of~\cite{McDuff1998} for the compact case.} applies to give us a
  $1$-parameter family of diffeomorphisms taking one Stein structure
  to the other. See \cite{\cite{EliashbergCieliebak}} for details.
  Consequently, the underlying symplectic manifolds for the Stein
  structures $(\phi_0,J_0)$ and $(\phi_1,J_1)$ are symplectomorphic. }

{\par Before we begin with the proof of
  Theorem~\ref{thm:moser-def}. We show the following:}
\begin{lemma} \label{lem:cotangent} If two $4$-manifolds $X_0,X_1$ are
  homeomorphic, then their cotangent bundles are diffeomorphic as
  $8$-manifolds. If $\pi_1(X_i)\neq 0$, we assume that the $X_i$ are
  $s$-cobordant.
\end{lemma}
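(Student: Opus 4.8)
The plan is to promote the given smooth $s$-cobordism between the $4$-manifolds to a smooth $s$-cobordism between the $8$-manifolds $T^*X_0$ and $T^*X_1$, and then to invoke the $s$-cobordism theorem in the comfortable dimension range. First I would fix a smooth $s$-cobordism $W^5$ with $\del W = X_0 \sqcup X_1$: when $\pi_1(X_i)= 0$ this is available because homeomorphic simply connected $4$-manifolds are smoothly $h$-cobordant and the Whitehead group of the trivial group vanishes, while when $\pi_1(X_i)\neq 0$ its existence is precisely our standing hypothesis. The key idea is then to realize $T^*X_0$ and $T^*X_1$ as the two ends of the total space of a single rank-$4$ vector bundle over $W$.

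To build that bundle, recall that a connected open $4$-manifold has the homotopy type of a complex of dimension at most $3$, so $W\simeq X_0$ is homotopy equivalent to a $3$-complex. Consequently the rank-$5$ bundle $T^*W$ splits off a trivial line, $T^*W \cong E \oplus \underline{\R}$ with $E$ of rank $4$, since a rank-$5$ bundle over a space of homotopy dimension $\leq 3$ admits a nowhere-vanishing section (the obstruction lies in $H^5(W;\pi_4 S^4)=0$). Along each end, $T^*W|_{X_i} \cong T^*X_i \oplus \underline{\R}$, the extra summand being the conormal direction of the collar, so $E|_{X_i} \oplus \underline{\R} \cong T^*X_i \oplus \underline{\R}$. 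Because rank-$4$ bundles over a $3$-complex are detected by their stable class (the map $BO(4)\to BO$ is an isomorphism on $\pi_i$ for $i\leq 3$), this stable isomorphism upgrades to an honest one, $E|_{X_i} \cong T^*X_i$.

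The total space of $E$ is then a smooth $9$-manifold with $\del E = T^*X_0 \sqcup T^*X_1$. Deformation retracting $E$ onto its zero section $W$ and each $T^*X_i$ onto $X_i$ shows that the inclusions $T^*X_i \into E$ are homotopy equivalences, so $E$ is an $h$-cobordism; moreover its Whitehead torsion is carried by that of the pair $(W,X_i)$ and hence vanishes, so $E$ is in fact an $s$-cobordism. Applying the $s$-cobordism theorem to $E$, a cobordism of dimension $9\geq 6$, would give $E \cong T^*X_0 \times [0,1]$ and in particular the desired diffeomorphism $T^*X_0 \cong T^*X_1$.

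The hard part is that $E$, $T^*X_0$, and $T^*X_1$ are all non-compact, so the classical compact $s$-cobordism theorem does not literally apply; I expect this to be the crux. The route through it is a proper version of the $s$-cobordism theorem, for which one must control behavior at infinity, and here the bundle structure is exactly what supplies that control: near infinity $E$ is the pullback of $T^*W$ over the ends of $W$, so any product structure on $W$ away from a compact core propagates through the bundle projection and lets the handle cancellations be carried out properly. I would therefore spend most of the effort verifying the tameness and control hypotheses needed to run the proper $s$-cobordism theorem, after which the diffeomorphism $T^*X_0 \cong T^*X_1$ follows as above.
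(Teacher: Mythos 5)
Your overall strategy is the same as the paper's: build a rank-$4$ vector bundle over the $5$-dimensional $s$-cobordism $W$ whose restrictions to the two ends are $T^*X_0$ and $T^*X_1$, and then feed the resulting high-dimensional cobordism to the $s$-cobordism theorem. Your construction of the bundle (splitting $T^*W\cong E\oplus\underline{\R}$ over the $3$-complex $W$ and destabilizing via $BO(4)\to BO$) is a perfectly good alternative to the paper's (pulling back $T^*X_0$ along the homotopy equivalence $W\to X_0$ and matching the restriction to $X_1$ with $T^*X_1$ by characteristic classes); these are equivalent in this range.

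Where you diverge is the last step, and that is where your argument has a gap which the paper's version is designed to avoid. You apply the $s$-cobordism theorem once, directly to the full total space $E$, a cobordism whose fibers are non-compact. The paper instead runs a two-step argument with compact fibers: first apply the $s$-cobordism theorem to the unit sphere bundle $S(T)$, a cobordism of $7$-manifolds, to obtain a product structure on the boundary; then apply it to the unit disc bundle $D(T)$, a cobordism of $8$-manifolds with boundary which is already a product on its boundary; finally restrict to the interior to get $T^*X_0\cong T^*X_1$. The reason this detour matters is exactly the issue you flag as ``the crux'' and then defer. Since the $X_i$ are open, any version of the $s$-cobordism theorem used here must be a proper one, and its hypothesis is that the ends include as \emph{proper} homotopy equivalences. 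For $S(T)$ and $D(T)$ the fibers are compact, so properness of $S(T)|_{X_i}\hookrightarrow S(T)$ is inherited directly from properness of $X_i\hookrightarrow W$. For $E$ it is not: the natural deformation retraction of $E$ onto its zero section is not a proper map (the preimage of a compact set in $W$ is an entire non-compact bundle over it), so your assertion that $T^*X_i\hookrightarrow E$ is a suitable ($s$-)cobordism inclusion does not follow from retracting to $W$, and the ``control at infinity'' you hope the bundle structure supplies is precisely what remains to be proved. In short: you have correctly located the difficulty but not resolved it, and the sphere-bundle/disc-bundle factorization is the standard device that confines all of the non-compactness to the base direction, where it is governed by $W$ alone.
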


\begin{proof}
  Let $W$ be an $h$-cobordism between $X_0$ and $X_1$. If
  $\pi_1(X_i)\neq 0$, then assume that the Whitehead torsion of $W$
  vanishes so that $W$ is an $s$-cobordism. There is a rank $4$ real
  vector bundle $T$ on $W$ which restricts to $T^*X_0$ and $T^*X_1$ on
  $\partial W$. The bundle $T$ can be obtained by pulling back
  $T^*X_0$ via the homotopy equivalence $W\to X_0$. Then we see that
  $T|_{X_1}$ is isomorphic to $T^*X_1$ by noting that it has the
  requisite characteristic classes.

  The unit sphere bundle $S(T)$ is then an $s$-cobordism between
  $7$-manifolds -- the unit sphere bundles of $T^*X_i$ -- and, by the
  $s$-cobordism theorem, a product. 

  Now, taking the unit disc bundle $D(T)$, we get an $s$-cobordism of
  $8$-manifolds with boundary.  As we have seen, it is a product on
  the boundary. As this is again a product by the $s$-cobordism
  theorem, the diffeomorphism of the $T^*X_i$ follows by restricting
  to the interior.
\end{proof}

\begin{proof}[Proof of Theorem~\ref{thm:moser-def}]

  If $X_0$ and $X_1$ are homeomorphic, then $T^*X_0$ and $T^*X_1$ are
  diffeomorphic by Lemma~\ref{lem:cotangent}. Choose some
  representative $V$ of this diffeomorphism type and particular
  diffeomorphisms $\daleth_i:V \to T^*X_i$. The canonical $1$-forms,
  symplectic forms and choices of almost complex structures for the
  cotangent bundles pull back to $\lambda_i$, $\omega_i$ and $J_i$ on
  $V$.

  First, we show that $J_0$ and $J_1$ are homotopic as almost complex
  structures. Write $\mathcal{I}(n)$ for the space
  $GL^+(2n,\R)/GL(n,\C)$ which classifies almost complex structures on
  $\R^{2n}$. The space $\mathcal{I}(n)$ is homotopy equivalent to
  $SO(2n)/U(n)$ and, when $n=4$, we can compute several of the
  homotopy groups:
  \begin{eqnarray*}
    \pi_0 \mathcal{I}(4) = 0,  &
    \pi_1 \mathcal{I}(4) = 0,  & 
    \pi_2 \mathcal{I}(4) = \Z, \\
    \pi_3 \mathcal{I}(4) = 0, & 
    \pi_4 \mathcal{I}(4) = 0, & 
    \pi_5 \mathcal{I}(4) = 0, \\
    \pi_6 \mathcal{I}(4) = \Z, & & 
  \end{eqnarray*}
  The obstructions to a homotopy between $J_0$ and $J_1$ lie in
  $H^i(V,V^{(i-1)};\pi_i\mathcal{I}(4))$ where $V^{(i-1)}$ is the
  $i-1$--skeleton of $V$. As $H^i(V;\Z)=0$ for $i\geq 4$, the only
  non-trivial group in this list is
  $H^2(V,V^{(1)};\pi_2\mathcal{I}(4))\cong H^2(V,V^{(1)};\Z) \cong
  H^2(X_i,X_i^{(1)})$. However, $J_0$ and $J_1$ are homotopic over the
  $2$-skeleton so this obstruction vanishes. (Clearly they both have
  the same first Chern class.) Therefore, $J_0$ and $J_1$ are
  homotopic as almost complex structures.

  As we saw above, for each Morse function $f_i$ on $X_i$, we obtain
  Morse functions $\phi_i$ on $T^*X_i$ whose critical points (and,
  once a metric is chosen, flow lines along the zero section) can be
  identified with those of $f_i$. Note that the $\phi_i$ will be
  proper and bounded below when the $f_i$ are.

  In order to apply Theorem~\ref{thm:EC}, we will need to show that
  the $X_i$ admit Morse functions without index $4$-critical
  points. To construct such a Morse function, begin with a Morse
  function which is bounded below and whose critical values are
  discrete, with a single critical point per critical value.  With
  such a choice, $X_i$ is identified with a composition of elementary
  cobordisms.

  Let $x_4$ be an index $4$ critical point. Then the unstable manifold
  of $x_4$ must include trajectories to at most finitely many index
  $3$ critical points. Further, if $x_3$ is an index $3$ critical
  point, its stable manifold must meet at most $2$ index $4$ critical
  points. (Since the stable manifold is $1$ dimensional.) As
  $H_4(X_i;\Z)=0$, the boundary map $\del:C_4(X)\to C_3(X)$ is
  injective; so, after possibly performing some handle-slides, we can
  cancel any index $4$ critical point with an index $3$ critical
  point. Therefore, each of the $X_i$ admit a Morse function $f_i$ without
  index $4$-critical points.

  Let $(\phi_i,J_i)$ be the Stein structures on $V$ constructed using
  Theorem~\ref{thm:E} where $J_i$ is a complex structure homotopic to
  the almost complex structure $J'_i$ constructed from the metric on
  $X_i$. As each of the $\phi_i$ are subcritical, we then apply
  Theorem~\ref{thm:EC} to see that the $(\phi_i,J_i)$ are Stein
  homotopic. \end{proof}


\begin{thebibliography}{10}

\bibitem{MR2874640}
M.~Abouzaid.
\newblock Framed bordism and {L}agrangian embeddings of exotic spheres.
\newblock {\em Ann. of Math. (2)}, 175(1):71--185, 2012.

\bibitem{arXiv1007.3281}
M.~{Abouzaid} and P.~Seidel.
\newblock {Altering symplectic manifolds by homologous recombination}.
\newblock {\em ArXiv e-prints}, July 2010.

\bibitem{MR0123732}
E.~Bishop.
\newblock Mappings of partially analytic spaces.
\newblock {\em Amer. J. Math.}, 83:209--242, 1961.

\bibitem{EliashbergCieliebak}
K.~Cieliebak and Y.~Eliashberg.
\newblock {\em From Stein to Weinstein and back: Symplectic Geometry of Affine
  Complex Manifolds}, volume~59 of {\em Colloquium Publications}.
\newblock American Mathematical Society, Providence, RI, November 2012.

\bibitem{MR1044658}
Y.~Eliashberg.
\newblock Topological characterization of {S}tein manifolds of dimension
  {$>2$}.
\newblock {\em Internat. J. Math.}, 1(1):29--46, 1990.

\bibitem{MR1201584}
M.~H. Freedman and F.~Quinn.
\newblock {\em Topology of 4-manifolds}, volume~39 of {\em Princeton
  Mathematical Series}.
\newblock Princeton University Press, Princeton, NJ, 1990.

\bibitem{MR0131877}
H.~Gluck.
\newblock The embedding of two-spheres in the four-sphere.
\newblock {\em Bull. Amer. Math. Soc.}, 67:586--589, 1961.

\bibitem{MR1707327}
R.~E. Gompf and A.~I. Stipsicz.
\newblock {\em {$4$}-manifolds and {K}irby calculus}, volume~20 of {\em
  Graduate Studies in Mathematics}.
\newblock American Mathematical Society, Providence, RI, 1999.

\bibitem{MR0098847}
H.~Grauert.
\newblock On {L}evi's problem and the imbedding of real-analytic manifolds.
\newblock {\em Ann. of Math. (2)}, 68:460--472, 1958.

\bibitem{McDuff1998}
D.~McDuff and D.~Salamon.
\newblock {\em Introduction to Symplectic Topology}.
\newblock Oxford Mathematical Monographs, second edition, 1998.

\bibitem{MR2497314}
M.~McLean.
\newblock Lefschetz fibrations and symplectic homology.
\newblock {\em Geom. Topol.}, 13(4):1877--1944, 2009.

\bibitem{MR0048805}
E.~E. Moise.
\newblock Affine structures in {$3$}-manifolds. {V}. {T}he triangulation
  theorem and {H}auptvermutung.
\newblock {\em Ann. of Math. (2)}, 56:96--114, 1952.

\bibitem{MR0148942}
R.~Narasimhan.
\newblock Imbedding of holomorphically complete complex spaces.
\newblock {\em Amer. J. Math.}, 82:917--934, 1960.

\bibitem{MR2182703}
P.~Seidel and I.~Smith.
\newblock The symplectic topology of {R}amanujam's surface.
\newblock {\em Comment. Math. Helv.}, 80(4):859--881, 2005.

\bibitem{MR0149457}
J.~Stallings.
\newblock The piecewise-linear structure of {E}uclidean space.
\newblock {\em Proc. Cambridge Philos. Soc.}, 58:481--488, 1962.

\end{thebibliography}

\end{document}